\documentclass[11pt, twoside]{article}
\usepackage{amssymb, amsmath, amsthm}
\usepackage[left=20mm, right=20mm, bottom=23mm, top=25mm]{geometry}
\usepackage{inputenc}
\usepackage{fancyhdr}
\usepackage{tikz}
\usetikzlibrary{positioning}
\usepackage{titling}
\usepackage{hyperref}
\predate{}
\postdate{}
\usepackage{lipsum}
\usepackage{titling}
\usepackage{xcolor}
\usepackage{placeins}
\usepackage{tikz}
\usepackage{verbatim}
\usepackage{titling}
\usepackage[T1]{fontenc}
\usepackage{currvita}
\usepackage{bbm}
\usepackage{enumitem}
\usepackage{cleveref}
\newtheorem{theorem}{Theorem}
\newtheorem{lemma}[theorem]{Lemma}
\newtheorem{proposition}[theorem]{Proposition}

\newtheorem{claim1}{Claim}

\newtheorem{conjecture}[theorem]{Conjecture}

\theoremstyle{definition}

\theoremstyle{remark}

\usepackage{xcolor}

\begin{document}

\newcommand{\Addresses}{
\bigskip
\footnotesize

\medskip

\noindent Maria-Romina~Ivan, \textsc{Department of Pure Mathematics and Mathematical Statistics, Centre for Mathematical Sciences, Wilberforce Road, Cambridge, CB3 0WB, UK.}\par\noindent\nopagebreak\textit{Email addresses: }\texttt{mri25@dpmms.cam.ac.uk}

\medskip

\noindent Sean~Jaffe, \textsc{Department of Pure Mathematics and Mathematical Statistics, Centre for Mathematical Sciences, Wilberforce Road, Cambridge, CB3 0WB, UK.}\par\noindent\nopagebreak\textit{Email address: }\texttt{scj47@cam.ac.uk}}

\pagestyle{fancy}
\fancyhf{}
\fancyhead [LE, RO] {\thepage}
\fancyhead [CE] {MARIA-ROMINA IVAN AND SEAN JAFFE}
\fancyhead [CO] {THE BOUNDED-LINEAR DICHOTOMY HOLDS FOR ALL 4 POINT POSETS}
\renewcommand{\headrulewidth}{0pt}
\renewcommand{\l}{\rule{6em}{1pt}\ }
\title{\Large{\textbf{THE BOUNDED-LINEAR DICHOTOMY HOLDS FOR ALL 4 POINT POSETS}}}
\author{MARIA-ROMINA IVAN AND SEAN JAFFE}
\date{ }
\maketitle
\begin{abstract}
Given a finite poset $\mathcal P$, its induced saturation number, $\text{sat}^*(n,\mathcal P)$ is the smallest size a family of subsets of $[n]$ can have such that it does not contain an induced copy of $\mathcal P$, but adding any other set to it creates one. The saturation numbers already show a sharp dichotomy -- Freschi, Piga, Sharifzadeh and Treglown showed that for any given poset its saturation number is either bounded or at least $2\sqrt n$. The dominant conjecture is that in fact, the saturation numbers are either bounded, or exactly linear.

In this paper we show that this dichotomy is true for all posets on at most 4 points. Most of these posets, most notably the butterfly and the diamond, had individually long resisted analysis, and the missing piece which this paper tackles is the poset comprised of a $\mathcal V$ and an isolated point, denoted by $\widehat{\mathcal V}$, for which we show linear saturation.

The architecture of the proof uncovers the structure of an arbitrary $\widehat{\mathcal V}$- saturated family, namely an antichain with at most two chains above it, and an upside-down forest below it. Moreover, we anticipate that, at least partially, this structure is relevant to the wider class of posets that contain an isolated point. 
\end{abstract}
\section{Introduction}
We say that a poset $(\mathcal Q,\preceq)$ contains an \textit{induced copy} of a poset $(\mathcal P,\preceq')$ if there exists an injective order-preserving function $f:\mathcal P\rightarrow\mathcal Q$ such that $(f(\mathcal P),\preceq)$ is isomorphic to $(\mathcal P,\preceq')$. We denote by $\mathcal P([n])$ the power set of $[n]=\{1,2,\dots,n\}$. Moreover, for any finite set $S$, we denote by $\mathcal P(S)$ the power set of $S$. 

If $\mathcal P$ is a finite poset and $\mathcal F$ is a family of subsets of $[n]$, we say that $\mathcal F$ is $\mathcal P$-\textit{saturated} if $\mathcal F$ does not contain an induced copy of $\mathcal P$, and for any $S\notin\mathcal F$, the family $\mathcal F\cup\{S\}$ contains an induced copy of $\mathcal P$. The smallest size of a $\mathcal P$-saturated family of subsets of $[n]$ is called the \textit{induced saturation number}, denoted by $\text{sat}^*(n,\mathcal P)$. For the rest of the paper all posets are finite, and all copies are induced.

It has been shown that the growth of $\text{sat}^*(n,\mathcal P)$ has a dichotomy. Keszegh, Lemons, Martin, P{\'a}lv{\"o}lgyi and Patk{\'o}s \cite{keszegh2021induced} proved that for any poset the induced saturation number is either bounded or at least $\log_2(n)$. They also made the following conjecture.
\begin{conjecture}[\cite{keszegh2021induced}]
    \label{conjecture:main} Let $\mathcal P$ be a finite poset. Then either $\text{sat}^*(n,\mathcal P)$ is bounded, or $\text{sat}^*(n,\mathcal P)\geq n+1$.
\end{conjecture}

Later, Freschi, Piga, Sharifzadeh and Treglown \cite{freschi2023induced} improved this result by replacing $\log_2 (n)$ with $2\sqrt{n}$. 

In the other direction, there is no known poset $\mathcal P$ for which $\text{sat}^*(n,\mathcal P)=\omega(n)$, and it is in fact believed that for any poset, the saturation number is either constant or it grows linearly.

\vspace{0.5em}
We digress slightly and discuss briefly what forces a poset to have unbounded saturation number.

We say that a family $\mathcal F\subseteq\mathcal P([n])$ \textit{separates the ground set} if, for any two distinct elements $i,j\in[n]$, there exists a set in $\mathcal F$ that contains one but not the other. It was shown in \cite{keszegh2021induced,ferrara2017saturation} that the existence of an $n_0$ and a $\mathcal P$-saturated family in $\mathcal P([n_0])$ that does not separate the elements of the ground set implies that $\mathcal P$ has bounded saturation number. Conversely, they showed that if a $\mathcal P$-saturated family separates the ground set $[n]$, it has size at least $\log_2(n)$. Later, Freschi, Piga, Sharifzadeh and Treglown \cite{freschi2023induced} modified the notion of separability to `for every $i\in[n]$ there exist $A,B\in\mathcal F$ such that $A\setminus B=\{i\}$', and arrived at the same conclusion, with $\log_2(n)$ replaced by $2\sqrt n$.

However, this tells us nothing about the intrinsic properties a poset must have in order for such a saturated family to exist. In fact, this question is very much open.

Partial progress towards answering this question was made in \cite{ferrara2017saturation} where it was shown that if a poset has the \textit{unique cover twin property (UCTP)}, then it has unbounded saturation number. An extension of this result was proved in \cite{keszegh2021induced}, where it was shown that attaching a chain above any poset with the UCTP gives a poset with unbounded saturation number still. It was also shown by Freschi, Piga, Sharifzadeh and Treglown \cite{freschi2023induced} that any poset with \textit{legs} (elements $l_1, l_2 $ such that $l_1, l_2$ are incomparable and less than $h$, and any other element of the poset is greater than $h$) has a saturation number that is at least linear, and therefore unbounded. This was further generalised by Liu \cite{liu2025induced} who removed the intermediary point $h$. Most recently, Ivan and Jaffe generalised these results \cite{gluing} by showing that taking the \textit{linear sum} of a poset with the UCTP on at least 2 elements and any arbitrary poset results in a poset with unbounded saturation number.

Sadly, these conditions are not exhaustive as the poset $2C_2$, which does not have the UCTP (or UCTP with top chain), does not have legs, and is not a linear sum of a poset with the UCTP and another poset, was shown to have unbounded saturation number \cite{keszegh2021induced}.
\vspace{0.5em}
   
Returning to the question of determining the saturation number, there are very few posets for which even the growth order of the saturation number is known, and far fewer for which the exact number is known. Some posets for which it is known that the saturation number is linear include $\mathcal V_k$, $\Lambda_k$ for $k\geq2$ \cite{ferrara2017saturation, sums}, the butterfly \cite{ivan2020saturationbutterflyposet, keszegh2021induced}, the antichain \cite{keszegh2021induced, dhankovic2023saturation, bastide2024exact}, the $\mathcal N$ poset \cite{N}, and the diamond \cite{diamondlinear, exact}. It is worth mentioning that so far, each poset had to be analysed with tools developed for its own unique structure. In particular, part of the difficulty in the relatively young field of poset saturation is the lack of powerful general techniques.

In this paper we establish that all posets that have at most 4 elements have saturation number either linear or bounded. For the unique poset with 1 element, it is clear that the saturation number is 0. For 2 elements, there are exactly two posets (up to isomorphism), namely the chain $C_2$ and the antichain $\mathcal A_2$. Trivially, $\{\emptyset\}$ is $C_2$-saturated, hence $C_2$ has bounded saturation number. For $\mathcal A_2$, it is easy to see that any $\mathcal A_2$-saturated family has to be a full chain, and consequently $\text{sat}^*(n,\mathcal A_2)=n+1$, hence linear.

Moving on, there are exactly five posets on 3 points (up to isomorphism): the chain $C_3$, the antichain $A_3$, $\mathcal V$, $\Lambda$, and the poset comprised of a chain and an isolated element, which we denote by $C_2+C_1$. For $C_3$, it is trivial to see that the family $\{\emptyset, [n]\}$ is $C_3$-saturated, hence $C_3$ has bounded saturation number. Moreover, it was shown in \cite{keszegh2021induced} that the saturation number of $C_2+C_1$ is 4, hence also bounded. Next, \cite{ferrara2017saturation} established that the saturation number for both $\mathcal V$ and $\Lambda$ is exactly $n+1$, and that also the saturation number for $\mathcal A_3$ is $2n$, hence all linear.

Finally, up to isomorphism, there are exactly 16 posets on 4 points. Before we list them, we note that the saturation number of a poset is the same as the saturation number of its dual, i.e. the poset obtained from it by reversing all the relations. Out of the 16, 8 of them are self-dual, and their names and Hasse diagrams are depicted below.

\begin{figure}[h]
\centering
\begin{tikzpicture}
	\node at (0,1) {$\bullet$};
	\node at (0.5,1) {$\bullet$};
	\node at (1,1) {$\bullet$};
	\node at (1.5,1) {$\bullet$};
	\node at (0.75,-1) {$\mathcal{A}_4$};
	
	\node at (3,0.25) {$\bullet$};
	\node at (3,1.75) {$\bullet$};
	\node at (3.75,1) {$\bullet$};
	\node at (4.5,1) {$\bullet$};
	\draw (3,0.25) -- (3,1.75);
	\node at (3.75,-1) {$C_2+\mathcal{A}_2$};
	
	\node at (6,0) {$\bullet$};
	\node at (6,1) {$\bullet$};
	\node at (6,2) {$\bullet$};
	\node at (7.25,1) {$\bullet$};
	\draw (6,0) -- (6,1) -- (6,2);
	\node at (6.75,-1) {$C_3+\mathcal{A}_1$};
	
	\node at (9,0.25) {$\bullet$};
	\node at (9,1.75) {$\bullet$};
	\node at (10,0.25) {$\bullet$};
	\node at (10,1.75) {$\bullet$};
	\draw (9,0.25) -- (9,1.75);
	\draw (10,0.25) -- (10,1.75);
	\node at (9.5, -1) {$2C_2$};
	
	\node at (12, 0) {$\bullet$};
	\node at (12, 0.75) {$\bullet$};
	\node at (12, 1.5) {$\bullet$};
	\node at (12, 2.25) {$\bullet$};
	\draw (12,0) -- (12,2.25);
	\node at (12,-1) {$C_4$};
\end{tikzpicture}
\end{figure}

\begin{figure}[h]
\centering
\begin{tikzpicture}
	\node at (0.5,-5) {$\bullet$};
	\node at (0.5,-3) {$\bullet$};
	\node at (2.5,-5) {$\bullet$};
	\node at (2.5,-3) {$\bullet$};
	\draw (0.5,-5) -- (0.5,-3) -- (2.5,-5) -- (2.5,-3);
	\node at (1.5,-6) {The $\mathcal{N}$ poset};
	
	\node at (6,-5) {$\bullet$};
	\node at (5,-4) {$\bullet$};
	\node at (7,-4) {$\bullet$};
	\node at (6,-3) {$\bullet$};
	\draw (6,-5) -- (5,-4) -- (6,-3) -- (7,-4) -- (6,-5);
	\node at (6,-6) {The diamond};
	
	\node at (9.5,-5) {$\bullet$};
	\node at (11.5,-5) {$\bullet$};
	\node at (9.5,-3) {$\bullet$};
	\node at (11.5,-3) {$\bullet$};
	\draw (9.5,-5) -- (9.5,-3) -- (11.5,-5) -- (11.5,-3) -- (9.5,-5);
	\node at (10.5,-6) {The butterfly};
\end{tikzpicture}
\end{figure}

As already mentioned above, we know that the saturation number for the $\mathcal N$ poset, the diamond, and the butterfly is linear. In \cite{keszegh2021induced} it was shown that the saturation numbers for both $C_2+\mathcal A_2$ and $C_3+\mathcal A_1$ are at most 8, hence bounded, as well as the fact that the saturation number for $2C_2$ is between $n+2$ and $2n$, hence linear. In \cite{ferrara2017saturation} it was shown that the saturation number for $\mathcal A_4$ is $3n-1$, so linear, and in \cite{gerbner2013saturating} it was shown that the saturation number for $C_4$ is 4, so bounded.

Finally, the remaining 8 posets come in  4 pairs: the poset and its dual. The Hasse diagrams are depicted below, where only one representative of each such pair is shown.

\begin{figure}[h]
	\centering
	\begin{tikzpicture}
		
		\node at (0,0) {$\bullet$};
		\node at (-1,2) {$\bullet$};
		\node at (0,2) {$\bullet$};
		\node at (1,2) {$\bullet$};
		\draw (0,0) -- (-1,2);
		\draw (0,0) -- (0,2);
		\draw (0,0) -- (1,2);
		\node at (0,-1) {$\mathcal{V}_3$};
		
		\node at (4,0) {$\bullet$};
		\node at (4,1) {$\bullet$};
		\node at (3,2) {$\bullet$};
		\node at (5,2) {$\bullet$};
		\draw (4,0) -- (4,1) -- (3,2);
		\draw (4,1) -- (5,2);
		\node at (4,-1) {The $Y$ poset};
		
		\node at (8,0) {$\bullet$};
		\node at (8.75,1) {$\bullet$};
		\node at (7.25,1) {$\bullet$};
		\node at (6.5,2) {$\bullet$};
		\draw (6.5,2) -- (8,0) -- (8.75,1);
		\node at (8,-1) {Diamond without an edge};
		
		\node at (12,0) {$\bullet$};
		\node at (11,1.5) {$\bullet$};
		\node at (13,1.5) {$\bullet$};
		\node at (14,0.75) {$\bullet$};
		\draw (11,1.5) -- (12,0) -- (13,1.5);
		\node at (12.75,-1) {The poset $\widehat{\mathcal{V}}$};
	\end{tikzpicture}
\end{figure}

As discussed above, the saturation number for any fork $\mathcal V_k$ with $k\geq2$ is linear. Furthermore, \cite{freschi2023induced} showed that the saturation number for the $Y$ poset is $n+2$, hence linear, and \cite{keszegh2021induced} showed that the saturation number for the diamond without an edge is 4, hence bounded. Therefore, the only remaining 4-point poset for which the bounded-linear dichotomy has not yet been established is $\widehat{\mathcal V}$ (and its dual). 

In this paper we establish that the saturation number for $\widehat{V}$ is linear, which completes the list. Since it is easy to find a $\widehat{V}$-saturated family of linear size (all complements of singletons together with $\emptyset$ and $[n]$), the paper focuses on establishing the linear lower bound. The proof strategy is as follows. We first observe that if $\mathcal F$ is a $\widehat{\mathcal V}$-saturated family such that all its sets are comparable to some fixed set (other than $\emptyset$ and $[n]$), then we can decompose $\mathcal F$ into two $\widehat{\mathcal V}$-saturated families with strictly smaller ground sets, which leads to an inductive argument. Thus, we assume that our saturated family $\mathcal F$ does not have this property -- it is `inseparable'. We then choose a maximal antichain with the smallest up-set in $\mathcal F$ and show that above this antichain our family is contained in at most 2 chains, whilst below it it is an upside-down forest. By choosing a special element $A$ of this antichain, we obtain the two main counting lemmas, one coming from the singletons inside $A$, while the other coming from the ones outside $A$. It turns out that for the singletons inside $A$ we can only `descend' up to a minimal element of $\mathcal F\setminus\{\emptyset\}$. However, a third, more straightforward counting argument lower-bounds the size of $\mathcal F$ by the size of such a minimal element, which is then enough to close the proof.

The plan of the paper is the following. In Section 2 we prove the most straightforward counting argument, Lemma~\ref{lemma:minimal}, which says that the size of $\mathcal F$ is at least 2 more than the size of a minimal element of $\mathcal F\setminus\{\emptyset\}$, as well as set up the above mentioned antichain and inseparable structure. In Section 3 we completely focus on the inseparable case (there exist no non-trivial set that is comparable to all sets of $\mathcal F$) and prove linearity directly by combining Lemmas~\ref{lemma:count1} and~\ref{lemma:count2} (counting above and below a certain element of the antichain) with Lemma~\ref{lemma:minimal}. Section 4 ties all previous sections together and establishes the claimed linearity for $\widehat{\mathcal V}$.

\section{Preliminaries}
Let $\mathcal{F}\subseteq\mathcal P([n])$ be a $\widehat{\mathcal{V}}$-saturated family such that $|\mathcal F|<2n$, and $n\geq3$. Then, it is clear that $\emptyset\in\mathcal F$ and $[n]\in\mathcal{F}$, as they can never be part of an induced copy of $\widehat{\mathcal V}$. We now prove the following standard lemma.

\begin{lemma}\label{lemma:minimal}
Let $M$ be a minimal element of $\mathcal{F}\setminus\{\emptyset\}$. Then $|\mathcal{F}|\geq|M|+2$.
\end{lemma}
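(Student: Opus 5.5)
The plan is to exhibit, for each element $i\in M$, a distinct set of $\mathcal F$ that is not $\emptyset$ or $[n]$. These come from the induced copies of $\widehat{\mathcal V}$ created by adding the sets $M\setminus\{i\}$. Together with $\emptyset$ and $[n]$, this would give $|\mathcal F|\geq |M|+2$.

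We may assume $|M|\geq 2$. If $|M|\leq 1$, the bound follows from the presence of $\emptyset$ and $[n]$ (and $M$ itself when $|M|=1$). Fix $i\in M$ and put $S_i=M\setminus\{i\}$. Since $M$ is minimal in $\mathcal F\setminus\{\emptyset\}$ and $S_i\neq\emptyset$, we have $S_i\notin\mathcal F$. Hence $\mathcal F\cup\{S_i\}$ contains an induced copy of $\widehat{\mathcal V}$ that uses $S_i$. Write the copy as $X<Y,Z$ with $Y,Z$ incomparable and $W$ incomparable to all three. No element of $\mathcal F\setminus\{\emptyset\}$ lies below $S_i$, because such a set would lie strictly below $M$. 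So $S_i$ is not the top of a relation in the copy, and $S_i$ is either $X$ or $W$.

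Case $S_i=X$. Then $Y,Z\in\mathcal F$ both contain $S_i$ and are incomparable. $W\in\mathcal F$ is incomparable to $S_i$, $Y$ and $Z$. Suppose neither $Y$ nor $Z$ contains $M$, i.e. $i\notin Y\cup Z$. Then $M$ itself would be incomparable to both $Y$ and $Z$, so $M$ cannot replace $X$ directly. Instead, in this case I expect $W$ to be the witness. Since $W$ is incomparable to $S_i$ and $W\not\subseteq M$ unless $W=\emptyset$ or $W=M$, the set $W$ misses some element of $S_i$. If one of $Y,Z$ contains $i$, it contains $M$, and I would try to show that it is the needed witness.

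Case $S_i=W$. Then $X<Y,Z$ in $\mathcal F$, all incomparable to $S_i$. If $M$ were also incomparable to all three, then $\mathcal F$ would already contain a copy, which is impossible. So one of $X,Y,Z$ is comparable to $M$. Being incomparable to $S_i$, that set cannot be a subset of $M$ (other than $\emptyset$, which is comparable to everything). Hence it contains $M$, and more precisely it contains $i$ but not all of $S_i$.

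The key observation throughout is that a set $T\in\mathcal F$ that is incomparable to $S_i$ but comparable to $M$ must satisfy $i\in T$, $T\supsetneq M$ fails, and so on. I want to assign to each $i$ a set $T_i\in\mathcal F\setminus\{\emptyset,[n]\}$ with $T_i\cap M=M\setminus\{i\}$, or dually with $T_i\cap M$ determined by $i$. For instance, in the first case, if $i\notin Y$, then $Y\cap M=S_i$. Such sets are automatically distinct for distinct $i$, because their traces on $M$ differ.

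So the concrete target is: for each $i\in M$, find $T_i\in\mathcal F$ with $T_i\cap M=M\setminus\{i\}$. Together with $\emptyset$ and $[n]$, this gives $|M|+2$ distinct sets. The hardest part is the case analysis showing that such a $T_i$ always exists. This means ruling out copies in which every set involved either contains $M$ or is disjoint from $S_i$ in a more complicated pattern. For each such configuration I would try to swap $M$ in for $S_i$ to produce a copy inside $\mathcal F$, contradicting $\widehat{\mathcal V}$-freeness. If a direct trace argument fails, the fallback is to allow $T_i$ with $i\notin T_i$ and $T_i\cap S_i\neq\emptyset$. I would then use a Hall-type or injectivity argument over the sets $\{T_i\}$ to get $|M|$ distinct ones.
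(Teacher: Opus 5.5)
\textbf{Verdict: genuine gap.} Your framework matches the paper's. For each $i\in M$ you look for $T_i\in\mathcal F$ with $T_i\cap M=M\setminus\{i\}$, which is equivalent to the paper's $M\setminus Q_i=\{i\}$. Such sets are automatically distinct and differ from $\emptyset$ and $[n]$, and you correctly rule out $S_i=M\setminus\{i\}$ being a top of the $\mathcal V$. However, the existence of $T_i$ is the entire content of the lemma, and your case analysis neither proves it nor points the right way.

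\textbf{Isolated-point case ($S_i=W$).} You conclude that the set among $X,Y,Z$ comparable to $M$ contains $M$ but not all of $S_i$. That is self-contradictory, since $S_i\subset M$. The correct conclusion is that this case cannot occur:
\begin{itemize}
\item a set containing $M$ contains $S_i$, so it would be comparable to $S_i$;
\item a set inside $M$ that is incomparable to $S_i$ contains $i$ and misses a point of $S_i$, so it is a nonempty proper subset of $M$, contradicting minimality.
\end{itemize}

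\textbf{Bottom-of-$\mathcal V$ case ($S_i=X$).} Here you have the roles reversed. A top element containing $i$ contains $M$, so it can never serve as $T_i$. Conversely, if some top element, say $Y$, omits $i$, then $S_i\subset Y$ immediately gives $Y\cap M=S_i$. So $Y$ itself is the witness, and there is no reason to expect $W$ to be one.

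The missing step is to rule out that both $Y$ and $Z$ contain $i$. They cannot both equal $M$. If one equals $M$, the other is incomparable to $M$ and so omits $i$. Hence in the bad situation $M\subsetneq Y$ and $M\subsetneq Z$. Now carry out the swap you only gesture at: $M,Y,Z,W$ form a copy of $\widehat{\mathcal V}$ inside $\mathcal F$ unless $W$ is comparable to $M$. But $M\subseteq W$ gives $S_i\subset W$, and $W\subseteq M$ gives $W\subset Y$; both contradict the incomparabilities of the original copy. So some top element omits $i$ and is your $T_i$.

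With these two fixes every $i\in M$ gets a witness, and the vague Hall-type fallback is unnecessary.
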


\begin{proof}
First, we show that for every $i\in M$, we can find $Q_i\in\mathcal{F}$ such that $M\setminus Q_i = \{i\}$.

We observe that when $n\geq3$, $\mathcal P([n])$ contains a copy of $\widehat{\mathcal V}$, thus $\mathcal F\neq\mathcal P([n])$. Therefore $|\mathcal{F}|\geq |\widehat{\mathcal{V}}|-1 = 3$, since if $X\not\in \mathcal{F}$, then we require $ |\widehat{\mathcal{V}}|-1$ many elements of $\mathcal F$ to form a copy of \(\widehat{\mathcal{V}}\) with $X$. If $M$ is a singleton, then we trivially have the claimed inequality, so we may assume that $|M|\geq 2$.

Fix $i\in M$. Then $M\setminus \{i\}\not\in \mathcal{F}$ as $M$ is minimal. Thus, adding $M\setminus\{i\}$ to $\mathcal{F}$ forms a $\widehat{\mathcal{V}}$.

\noindent\textbf{Case 1.} $M\setminus\{i\}$ is at the top of the $\mathcal{V}$. 

This case is depicted below, where $A,B\in\mathcal F$. This cannot happen as $M$ is minimal in $\mathcal F\setminus\{\emptyset\}$, $A\subsetneq M$, and $A\neq\emptyset$ as $A$ and $B$ are incomparable.

\begin{figure}[h]
\centering
\begin{tikzpicture}
\node at (0,0) {$A$};
\node at (0,0.5) {$\bullet$};
\draw (0,0.5) -- (0.75,2);
\draw (0,0.5) -- (-0.75,2);
\node at (0.75,2) {$\bullet$};
\node at (-0.75,2) {$\bullet$};
\node at (2,0.75) {$\bullet$};
\node at (2.4,0.75) {$B$};
\node at (-1.6,2) {$M\setminus\{i\}$};
\draw (-0.75,2) -- (-0.75,2.75);
\node at (-0.75,2.75) {$\bullet$};
\node at (-1.25,2.75) {$M$};
\end{tikzpicture}
\end{figure}

\noindent\textbf{Case 2.} $M\setminus \{i\}$ is the isolated point. 

Let $A,B,C\in\mathcal F$ form a $\mathcal V$ that is incomparable to $M\setminus\{i\}$, as depicted below.

\begin{figure}[h]
\centering
\begin{tikzpicture}
\node at (0,0) {$A$};
\node at (0,0.5) {$\bullet$};
\draw (0,0.5) -- (0.75,2);
\draw (0,0.5) -- (-0.75,2);
\node at (0.75,2) {$\bullet$};
\node at (-0.75,2) {$\bullet$};
\node at (2,0.75) {$\bullet$};
\node at (3,0.75) {$M\setminus \{i\}$};
\node at (-1.2,2) {$B$};
\node at (1.2,2) {$C$};
\draw (2,0.75) -- (2,1.75);
\node at (2,1.75) {$\bullet$};
\node at (2.6, 1.75) {$M$};
\end{tikzpicture}
\end{figure}
Since $\mathcal F$ is $\widehat{\mathcal V}$-free, $M$ must be comparable to at least one of $A,B,C$. Since all of them are incomparable to $M\setminus\{i\}$ (and hence not $\emptyset$), at least one of them has to be a strict subset of $M$, contradiction.

\noindent\textbf{Case 3.} $M\setminus \{i\}$ is the minimal element of the $\mathcal{V}$.

\begin{figure}[h]
\centering
\begin{tikzpicture}
\node at (0,0) {$M\setminus\{i\}$};
\node at (0,0.5) {$\bullet$};
\draw (0,0.5) -- (0.75,2);
\draw (0,0.5) -- (-0.75,2);
\node at (0.75,2) {$\bullet$};
\node at (-0.75,2) {$\bullet$};
\node at (2,0.75) {$\bullet$};
\node at (2.4,0.75) {$C$};
\node at (-1.2,2) {$A$};
\node at (1.2,2) {$B$};
\end{tikzpicture}
\end{figure}

Let $C$ be the isolated element of the copy of $\widehat{\mathcal{V}}$, and $A,B$ be the maximal elements of the copy. Of course $A,B,C\in\mathcal F$.  

If one of the top elements of the $\mathcal{V}$ (without loss of generality, say $A$) is equal to $M$, then $A=M$ is incomparable to $B$, which implies that $i\not\in B$, thus $M\setminus B = \{i\}$ as desired.

Therefore, we may assume that $A\neq M$ and $B\neq M$. If both $A$ and $B$ are comparable to $M$, we have the following diagram.

\begin{figure}[h]
\centering
\begin{tikzpicture}
\node at (0,0) {$\bullet$};
\node at (0,-0.5) {$M\setminus\{i\}$};
\draw (0,0) -- (0,1);
\node at (0,1) {$\bullet$};
\draw (0,1) -- (0.75,2);
\draw (0,1) -- (-0.75,2);
\node at (0.75,2) {$\bullet$};
\node at (-0.75,2) {$\bullet$};
\node at (-1.1,2) {$A$};
\node at (1.1,2) {$B$};
\node at (-0.5,1) {$M$};
\node at (1.75,1.5) {$\bullet$};
\node at (2.1,1.5) {$C$};
\end{tikzpicture}
\end{figure}

Since $A,B,M,C$ cannot form a $\widehat{\mathcal{V}}$ in $\mathcal{F}$, $M$ and $C$ must be comparable. However, this is not possible as $M\subseteq C$ implies $M\setminus\{i\}\subset C$, while $C\subseteq M$ implies $C\subset B$, a contradiction in both cases.
    
Therefore, one of $A$ or $B$ must be incomparable to $M$. Without loss of generality, we may assume that $A$ and $M$ are incomparable, which implies that $i\not\in A$, thus $M\setminus A = \{i\}$, as desired.

For the count, we note that no $Q_i$ can be $\emptyset$, thus all the $Q_i$, together with $M$ and $\emptyset$ give $|M|+2$ elements in $\mathcal F$, as desired.
\end{proof}

Next, we observe that $\mathcal F$ contains an antichain of size at least 3. Indeed, as noted in Lemma~\ref{lemma:minimal}, $\mathcal F\neq\mathcal P([n])$, thus there exists $X\notin\mathcal F$. By saturation, $\mathcal F\cup\{X\}$ contains a copy of $\widehat{\mathcal V}$, and consequently an antichain of size 3. Furthermore, the above argument shows that if $\mathcal F$ does not contain an antichain of size 3, then $\mathcal F$ is actually $\mathcal A_3$-saturated, which implies that $|\mathcal{F}|\geq 2n$ \cite{ferrara2017saturation}, a contradiction. Therefore $\mathcal F$ contains an antichain of size at least 3.

Let $\mathcal B\subseteq\mathcal F$ be a maximal antichain of size at least 3. For any such $\mathcal{B}$, let $\mathcal{U_{\mathcal B}}$ be the set $\{X\in\mathcal{F}:B\subsetneq X\text{ for some }B\in\mathcal{B}\}$. We call $\mathcal{U_{\mathcal B}}$ the \textit{upper cone} of $\mathcal{B}$.

Let $\mathcal{A}\subseteq\mathcal F$ be a maximal antichain of size at least 3 such that $|\mathcal{U_{\mathcal A}}|$ is minimal. For simplicity, from now on, we will simply use $\mathcal U$ for $\mathcal U_{\mathcal A}$.

\begin{lemma}\label{lemma:antichain}
Suppose that $\mathcal{C}\subseteq\mathcal{A}\cup \mathcal{U}$ is an antichain of size at least $3$. Then $\mathcal{C}\subseteq \mathcal{A}$.
\end{lemma}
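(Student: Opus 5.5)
The idea is to extend $\mathcal C$ to a maximal antichain $\mathcal C'$ of $\mathcal F$ whose upper cone is contained in $\mathcal U$. If $\mathcal C$ contained an element of $\mathcal U$, that element would lie in $\mathcal U$ but not in $\mathcal U_{\mathcal C'}$. This would contradict the choice of $\mathcal A$ as a maximal antichain of size at least $3$ with $|\mathcal U_{\mathcal A}|$ minimal.

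\textbf{Step 1: the extension.} Let $\mathcal C'$ be an antichain with $\mathcal C\subseteq\mathcal C'\subseteq\mathcal A\cup\mathcal U$ that is maximal among antichains contained in $\mathcal A\cup\mathcal U$. Then $|\mathcal C'|\geq 3$.

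\textbf{Step 2: $\mathcal C'$ is a maximal antichain of $\mathcal F$.} Let $Y\in\mathcal F\setminus(\mathcal A\cup\mathcal U)$; we show $Y$ is comparable to some element of $\mathcal C'$.
\begin{itemize}
\item Since $\mathcal A$ is a maximal antichain of $\mathcal F$, $Y$ is comparable to some $a\in\mathcal A$.
\item Since $Y\notin\mathcal U$, we cannot have $Y\supsetneq a$, so $Y\subsetneq a$.
\item If $a\in\mathcal C'$ we are done. Otherwise, by maximality of $\mathcal C'$ inside $\mathcal A\cup\mathcal U$, $a$ is comparable to some $c\in\mathcal C'$.
\item We cannot have $a\supsetneq c$. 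If $c\in\mathcal A$, this contradicts that $\mathcal A$ is an antichain. If $c\in\mathcal U$, then $c\supsetneq a'$ for some $a'\in\mathcal A$, giving $a\supsetneq a'$, again a contradiction.
\item Hence $a\subseteq c$, so $Y\subsetneq c$, and $Y$ is comparable to $c\in\mathcal C'$.
\end{itemize}
Thus $\mathcal C'$ is a maximal antichain of $\mathcal F$ of size at least $3$.

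\textbf{Step 3: $\mathcal U_{\mathcal C'}\subseteq\mathcal U$.} Let $X\in\mathcal U_{\mathcal C'}$, so $X\supsetneq D$ for some $D\in\mathcal C'$. If $D\in\mathcal A$, then $X\in\mathcal U$ directly. If $D\in\mathcal U$, then $D\supsetneq B$ for some $B\in\mathcal A$, so $X\supsetneq B$ and again $X\in\mathcal U$.

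\textbf{Step 4: strict inclusion and contradiction.} Suppose some $c\in\mathcal C$ lies in $\mathcal U$. Then $c\notin\mathcal U_{\mathcal C'}$, since otherwise $c$ would strictly contain another member of the antichain $\mathcal C'$ to which it belongs. Hence $|\mathcal U_{\mathcal C'}|<|\mathcal U|$, contradicting the minimality in the choice of $\mathcal A$. Therefore $\mathcal C\subseteq\mathcal A$.

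The main obstacle is Step 2, that is, ensuring the extension stays maximal in all of $\mathcal F$ without its upper cone leaving $\mathcal U$. A naive extension within $\mathcal F$ might add sets lying below $\mathcal A$ and thereby enlarge the upper cone. Restricting the extension to $\mathcal A\cup\mathcal U$ and then using the fact that everything outside $\mathcal A\cup\mathcal U$ lies strictly below some element of $\mathcal A$ handles this.
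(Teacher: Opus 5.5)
Your proposal is correct and follows essentially the same route as the paper: extend $\mathcal C$ to an antichain maximal inside $\mathcal A\cup\mathcal U$, show it is maximal in all of $\mathcal F$ with upper cone contained in $\mathcal U$, and note that any member of $\mathcal C\cap\mathcal U$ is missing from that cone, contradicting the minimality of $|\mathcal U|$. Your Step 2 also spells out why no element of $\mathcal A$ can lie strictly above a member of $\mathcal C'$, which the paper simply asserts as the fact that every element of $\mathcal A$ is a subset of an element of $\mathcal C^*$.
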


\begin{proof}
Suppose that $\mathcal{C}\not \subseteq \mathcal{A}$. Extend $\mathcal{C}$ to a maximal antichain $\mathcal{C}^*$ inside $\mathcal{A}\cup\mathcal{U}$. Since $\mathcal{C}^*$ is maximal, every element of $\mathcal A$ is a subset (not necessarily strict) of an element of $\mathcal C^*$. If $\mathcal{C}^*$ is not a maximal antichain in $\mathcal F$, then there exists $X\in\mathcal{F}\backslash\left(\mathcal{A}\cup\mathcal{U}\right)$ such that $X$ is incomparable to every element of $\mathcal{C}^*$. Since $\mathcal{A}$ is maximal, this implies that there exists $A\in\mathcal{A}$ such that $X$ and $A$ are comparable. As $X\not\in \mathcal{A}\cup\mathcal{U}$, we must have $X\subset A$, which implies that $X$ is a subset of an element of $\mathcal C^*$, a contradiction.

Therefore $\mathcal{C}^*$ is a maximal antichain of $\mathcal F$ of size at least 3. Furthermore, since $\mathcal C^*$ is inside $\mathcal{A}\cup\mathcal{U}$, $\mathcal U_{\mathcal C^*}\subseteq \mathcal{U}$. Furthermore, since there exists $U_0\in\mathcal{C}\cap\mathcal U$, the above inclusion is strict as $U_0\not\in \mathcal{U}_{\mathcal{C}^*}$. This contradicts the minimality of $|\mathcal U|$, hence proving the lemma.
\end{proof}

Lemma~\ref{lemma:antichain} has the following implications that we will use throughout the paper. First of all, any antichain in $\mathcal{U}$ has size at most 2. Thus, by Dilworth's theorem, it can be covered by at most $2$ chains. Furthermore, every $U\in \mathcal{U}$ is incomparable to at most one member of $\mathcal{A}$. Finally, if $U_1,U_2\in\mathcal{U}$ are incomparable, then no member of $\mathcal{A}$ is incomparable to both.

Next, we introduce the following definitions which naturally give rise to a two-case analysis and an inductive argument.  

We call a set $\emptyset\subsetneq S\subsetneq [n]$, a \emph{separator} for $\mathcal{F}$ if $S$ is comparable to every set of $\mathcal{F}$. If such an $S$ exists, then $S\in\mathcal{F}$. Indeed, suppose $S\not \in \mathcal{F}$. It forms an induced copy of $\widehat{\mathcal{V}}$ when combined with $\mathcal{F}$. Since $S$ is incomparable to at least one element of that copy of $\widehat{\mathcal{V}}$, it must be incomparable to an element of $\mathcal{F}$. This is a contradiction, so $S\in \mathcal{F}$. If $\mathcal{F}$ does not contain a separator, we say that $\mathcal{F}$ is \emph{inseparable}, and call it \emph{separable} when it does.

Moreover, the set $\{X\in\mathcal{F}:X\subseteq S\}$ is a $\widehat{\mathcal{V}}$-saturated subset of $\mathcal{P}(S)$. Indeed, it does not contain a copy of $\widehat{\mathcal{V}}$ since it is a subset of $\mathcal F$, and if $X\in \mathcal{P}(S)\backslash\{X\in\mathcal{F}:X\subseteq S\} $, then $X\notin\mathcal F$, and consequently $\mathcal F\cup\{X\}$ contains a copy of $\widehat{\mathcal{V}}$ in which $X$ is one of the elements. If that copy contains an element $Y$ such that $S\subseteq Y$, then that $Y$ will contain $X$. Therefore $X$ is the minimal of the $\mathcal V$ and $Y$ is one of the maximals. Let $Z\in\mathcal F$ be the isolated point of the copy of $\widehat{\mathcal{V}}$. If $Z$ is below $S$, then $Z\subseteq Y$, a contradiction, whilst if $Z$ is above $S$, then $X\subseteq Z$, a contradiction. This is depicted below.

\begin{figure}[h]
\centering
\begin{tikzpicture}
\node at (0,0) {$X$};
\node at (0,0.5) {$\bullet$};
\draw (0,0.5) -- (0.75,2);
\draw (0,0.5) -- (-0.75,2);
\node at (0.75,2) {$\bullet$};
\node at (-0.75,2) {$\bullet$};
\node at (1.1,2) {$Y$};
\node at (2,0.75) {$\bullet$};
\node at (2.4,0.75) {$Z$};
\end{tikzpicture}
\end{figure}
Therefore all the sets involved in the copy must be below $S$, so $\{X\in\mathcal{F}:X\subseteq S\}$ is a $\widehat{\mathcal{V}}$-saturated. Thus, \(|\{X\in\mathcal{F}:X\subseteq S\}|\geq sat^*(|S|,\widehat{\mathcal{V}})\).

A completely analogous argument shows that the family $\{X\in\mathcal{F}, S\subseteq X\}$ is a $\widehat{\mathcal{V}}$-saturated subset of $\{X\in \mathcal{P}([n]) : X\supseteq S\}$. Since $\{X\in \mathcal{P}([n]) : X\supseteq S\}$ is isomorphic to \(\mathcal{P}([n]\backslash S)\), we have: $|\{X\in\mathcal{F}, S\subseteq X\}|\geq sat^*(n - |S|,\widehat{\mathcal{V}})$.

 Since the only element in both families is $S$, we get the following inequality:
\begin{equation*}
    |\mathcal{F}| \geq \text{sat}^*(|S|,\widehat{\mathcal{V}}) + \text{sat}^*(n-|S|,\widehat{\mathcal{V}})-1.
\end{equation*}

\section{The inseparable case}
In this section we assume that our saturated family $\mathcal F$ is not separable and obtain a linear lower bound directly. 
\begin{proposition}\label{inseparable}
Suppose that $\mathcal F$ is not separable. Then $|\mathcal F|\geq\frac{n}{2}+2$.  
\end{proposition}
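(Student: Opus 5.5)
The plan follows the strategy outlined in the introduction. Fix the antichain $\mathcal A$ with minimal upper cone $\mathcal U$. By Lemma~\ref{lemma:antichain} and Dilworth's theorem, $\mathcal U$ is covered by at most two chains. First I will show that the part of $\mathcal F$ below $\mathcal A$ (sets not in $\mathcal A\cup\mathcal U$, each a strict subset of some member of $\mathcal A$) is an upside-down forest. The intended argument is that if $X$ lies below two incomparable sets $Y_1,Y_2$, then $X,Y_1,Y_2$ form a $\mathcal V$. Any member of $\mathcal A$ (of which there are at least $3$) incomparable to all three would create a forbidden $\widehat{\mathcal V}$. Using inseparability to rule out the degenerate configurations should then give a contradiction.

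Next I choose a special $A\in\mathcal A$, say one whose comparability pattern with the two chains of $\mathcal U$ is extremal, for instance one lying below every element of $\mathcal U$ except possibly those in a single chain. I then prove two counting lemmas. For the first, covering singletons $i\notin A$: consider $A\cup\{i\}$. If it is not in $\mathcal F$, saturation gives a $\widehat{\mathcal V}$ containing it, and analysing its role (minimum of the $\mathcal V$, top of the $\mathcal V$, or isolated point) should produce a set of $\mathcal F$ in $\mathcal U$, or in $\mathcal A$, that "witnesses" $i$. Roughly, I aim for a set $X\in\mathcal F$ with $X\setminus A=\{i\}$, or some similar injective witness. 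Because $\mathcal U$ consists of at most two chains, each step up a chain adds new elements, so the number of distinct witnesses is at least about $(n-|A|)/2$ up to an additive constant. The factor $1/2$ comes from the two chains.

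For the second lemma, covering singletons $i\in A$: consider $A\setminus\{i\}$. A similar case analysis, using that the structure below $A$ is a forest, lets me descend a branch of the forest below $A$. Each $i$ should produce a set $Q_i\in\mathcal F$ with $A\setminus Q_i$ small or $i\notin Q_i$, as in Lemma~\ref{lemma:minimal}. As the introduction warns, this descent only accounts for elements of $A\setminus M$, where $M$ is a minimal element of $\mathcal F\setminus\{\emptyset\}$ below $A$ on that branch. This yields roughly $|A|-|M|$ sets (again possibly halved), disjoint from those counted in the first lemma.

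Finally, I combine these with Lemma~\ref{lemma:minimal}, which gives $|\mathcal F|\ge |M|+2$. Taking a suitable average (or the maximum) of the three bounds, namely $(n-|A|)/2$ from above, $|A|-|M|$ from the descent, and $|M|+2$ from the minimal element, gives $|\mathcal F|\ge n/2+2$ after tracking constants. I expect the main obstacle to be the case analyses in the two counting lemmas. The difficulty is guaranteeing that the witnesses are distinct and genuinely new, given that a witness may land in either chain of $\mathcal U$ or on another member of $\mathcal A$. Inseparability must be invoked precisely to exclude the configuration in which everything is comparable to one set. I would first try to prove injectivity by showing each witness $X$ satisfies $X\setminus A=\{i\}$ (respectively $A\setminus X=\{i\}$ within a fixed branch).
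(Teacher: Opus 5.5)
\textbf{There is a genuine gap: as planned, your counts give only about $\frac n3+2$, not $\frac n2+2$.}

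Your outline matches the paper's architecture. Suppose the first lemma yields only about $\frac{n-|A|}{2}$ witnesses, and you grant $|A|-|M|$ further disjoint witnesses from the descent. Then $|\mathcal F|\gtrsim\frac{n-|A|}{2}+|A|-|M|+2\ge\frac{n-|M|}{2}+2$, since $|A|\ge|M|$. Combined with $|\mathcal F|\ge|M|+2$, this gives only about $\frac n3+2$ (take $|A|=|M|=\frac n3$). No averaging recovers $\frac n2+2$; that needs the exact count $|\mathcal F\setminus\{\emptyset,M\}|\ge n-|M|$.

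The missing idea is that witnesses for $i$ outside your special element, call it $\overline A$, must be measured against the members of a saturated chain, not against $\overline A$ itself. Let $X,Y$ be the two minimal elements of $\mathcal U$, with $\overline A\subset X$ and $\overline A$ incomparable to $Y$ (such $\overline A$ exists by inseparability, since otherwise $X\cap Y$ is a separator). Take any $i\notin X$ with $Y\not\subseteq\overline A\cup\{i\}$. Then $\overline A\subset\overline A\cup\{i\}$, $X$, together with the isolated point $Y$, already form a $\widehat{\mathcal V}$, so saturation at $\overline A\cup\{i\}$ tells you nothing about $i$.

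The paper instead fixes a saturated chain $\overline A=X_0\subset X_1=X\subset\dots\subset X_m=[n]$ in $\mathcal F$, and for $i\in X_{k+1}\setminus X_k$ it adds $X_k\cup\{i\}$. It uses three facts: nothing in $\mathcal F$ lies strictly between $X_k$ and $X_{k+1}$; $\mathcal U$ is covered by this chain and a chain $\mathcal C_Y$ above $Y$; and every member of $\mathcal A\setminus\{\overline A\}$ lies below $Y$. This last fact is what singles out $\overline A$, not an extremal comparability pattern. The result is either $X_k\cup\{i\}\in\mathcal F$ or some $Q_i\in\mathcal F$ with $Q_i\setminus X_k=\{i\}$. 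Because the $X_k$ are nested, the assignment of witnesses is injective on all of $[n]\setminus\overline A$. The second chain only eliminates cases and is not a second source of witnesses, so no factor $\tfrac12$ arises.

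\textbf{The descent is also not viable as stated.} When $\overline A$ is not minimal, adding $\overline A\setminus\{i\}$ may create a $\widehat{\mathcal V}$ in which it is a top element above a smaller member of the tree. That yields no witness; Lemma~\ref{lemma:minimal} excludes this case only through minimality of $M$. Moreover, a witness satisfying merely $i\notin Q_i$ is not injective.

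The paper walks down a saturated chain $\overline A=C_0\supset C_1\supset\dots\supset C_l=M$ in $\mathcal T_{\overline A}$. At each step it adds $B\cup\{i\}$, for a set $B$ covered by $C$ and $i\in C\setminus B$, and obtains $Z_i$ with $Z_i\setminus B=\{i\}$. This can fail for an arbitrary $B$ covered by $C$. Lemma~\ref{lemma:count2} shows by contradiction that a good $B$ exists: each bad $B$ yields another $B'$ covered by $C$ and some $M_B\in\mathcal C_Y$ with $B\subset M_B$ and $M_B$ incomparable to $B'$. Iterating produces an infinite strictly increasing sequence in $\mathcal C_Y$, which is impossible. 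This argument relies on the second half of Lemma~\ref{lemma:forest} (the width of $\mathcal U_W$ is at most $2$). The resulting $Z_i$ are subsets of $\overline A$ containing a point outside $M$, hence distinct from $M$ and from the first lemma's witnesses.

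Your forest step also needs Lemma~\ref{lemma:size-3} made explicit. If a nonempty set lay below two members of an antichain of size at least $3$, the union of all such sets would be a separator.
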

\begin{proof} The proof relies on the global structure $\mathcal F$ must have, which is particularly explored in the next two lemmas. 
\begin{lemma}\label{lemma:size-3}
No element of $\mathcal F\setminus\{\emptyset\}$ is below at least two elements of an antichain in $\mathcal{F}$ of size at least 3.
\end{lemma}

\begin{proof}
Let $\mathcal{B}$ be an antichain in $\mathcal{F}$ of size at least $3$. Without loss of generality, we may assume that $\mathcal B$ is maximal in $\mathcal F$.

Suppose that there exists an element in $\mathcal{F} \backslash\{ \emptyset\}$ below  at least two sets of $\mathcal{B}$. Then this set has to be comparable to all elements of $\mathcal{B}$, otherwise we have a copy of $\widehat{\mathcal{V}}$ in $\mathcal{F}$. Since it is already below sets in $\mathcal B$, an antichain, it then follows that it has to be below all sets of $\mathcal B$.

Next, let $\mathcal{L}= \{X\in\mathcal{F}\backslash\{\emptyset\}:X\subseteq B\text{ for all }B\in\mathcal{B}\}.$ By the above observations, $\mathcal L$ is not empty. Now let $L= \bigcup_{X\in\mathcal{L}}X$. Since $\mathcal{L}$ is non-empty, we get that $L\neq\emptyset$. Also, $L\subseteq\bigcap_{B\in\mathcal B}B$, thus $L\neq[n]$.

Moreover, if $S\in\mathcal{F}$, then $S$ and $L$ are comparable. Indeed, by the maximality of $\mathcal B$, there exists $B\in\mathcal B$ such that $S$ and $B$ are comparable (not necessarily distinct). If $B\subseteq S$, then $X\subseteq S$ for all $X\in\mathcal{L}$, and so $L\subseteq S$. If $S\subsetneq B$, then $S$ cannot be above any element of $\mathcal{B}$ as $\mathcal B$ is an antichain. $S$ is either below every element of $\mathcal B$, or is below exactly one of them. In the former case, $S$ must be a subset of $L$ as $S\in \mathcal{L}\cup \{\emptyset\}$. In the latter case, $S$ is incomparable to every element of $\mathcal B\setminus\{B\}$. Suppose that $S$ and $L$ are incomparable. Then there exists $X\in\mathcal{L}$ such that $X\not\subseteq S$, and hence $X$ and $S$ are incomparable. Thus $X$, $S$, and any two members of $\mathcal{B}\setminus\{B\}$ form a copy of $\widehat{\mathcal{V}}$ in $\mathcal F$, a contradiction.

Therefore, every element of $\mathcal{F}$ is comparable to \(L\) and $L\notin\{\emptyset,[n\}]$, contradicting the fact that $\mathcal F$ is not separable. This completes the proof.
\end{proof}

{\color{red} }

One of the most important consequences of Lemma~\ref{lemma:size-3}, when applied to the above setup, is the following structural characterisation of $\mathcal F$. It tells us that the Hasse diagram of $\{X\in \mathcal{F}\backslash \{\emptyset\} : X\subseteq A \text{ for some } A\in \mathcal{A}\}$ is a disjoint union of rooted trees, with each root drawn at the top.

\begin{lemma}\label{lemma:forest}
Let $A\in\mathcal A$ and $\mathcal{T}_A = \{X\in \mathcal{F}:\emptyset\neq X\subseteq A\}$. Then $\mathcal {T}_A$ is rooted tree with root $A$. Moreover, for every $W\in \mathcal{T}_A\setminus \{A\}$, the set $\mathcal{U}_W= \{X\in\mathcal{F}:W\subsetneq X\}$ has width at most 2.
\end{lemma}

\begin{proof}
First, suppose that there exist $X_1, X_2, X_3\in\mathcal T_A$ such that $X_3\subseteq X_1\cap X_2$, and $X_1$ and $X_2$ are incomparable. This is depicted below.

\begin{figure}[h]
\centering
\begin{tikzpicture}
\node at (0,0) {$\bullet$};
\node at (1,1) {$\bullet$};
\node at (-1,1) {$\bullet$};
\node at (0,2) {$\bullet$};
\draw (0,0) -- (1,1) -- (0,2) -- (-1,1) -- (0,0);
\node at (0,-0.5) {$X_3$};
\node at (0,2.5) {$A$};
\node at (1.5,1) {$X_2$};
\node at (-1.5,1) {$X_1$};
\end{tikzpicture}
\end{figure}

Since $\mathcal A$ has size at least 3, there exists $A^\prime\neq A$, $A^\prime\in \mathcal{A}$. By Lemma \ref{lemma:size-3} and the fact that we are inside $\mathcal T_A$, we get that $X_1$, $X_2$, and $X_3$ are incomparable to $A^\prime$, and so the four of them form a copy of $\widehat{\mathcal{V}}$ in $\mathcal F$, a contradiction. Therefore $\mathcal{T}_A$ is indeed a tree.

For the last part, let us fix $W\in \mathcal{T}_A\setminus\{A\}$. We note that if $X\in \mathcal{U}_W\setminus \mathcal{U}$, then $W\subsetneq X$, and $X$ is not above any element of $\mathcal{A}$. Therefore, by the maximality of $\mathcal A$, there exists $A^\prime\in \mathcal{A}$ such that $X\subseteq A^\prime$. We must have that $A^\prime = A$, otherwise $W$ will be below two elements of $\mathcal{A}$, contradicting Lemma~\ref{lemma:size-3}.

Putting everything together we get that all members of $\mathcal{U}_W\setminus\mathcal{U}$ lie below $A$ and, by definition, above $W$. Since we already established that $\mathcal T_A$ is a tree, we deduce that $\mathcal U_W\setminus\mathcal U$ is a chain.

Finally, suppose that we have 3 sets $Y_1,Y_2,Y_3\in\mathcal{U}_W$ that are pairwise incomparable. Since $\mathcal{U}_W\setminus\mathcal{U}$ is a chain and Lemma~\ref{lemma:antichain} tells us that the width of $\mathcal U$ is at most 2, we get that exactly two sets out of $Y_1,Y_2,Y_3$ are in $\mathcal{U}$. Without loss of generality, we may assume that $Y_3\not\in \mathcal{U}$. This, combined with the fact that $W\subseteq Y_3$ gives, by the same argument as above, that $Y_3\subseteq A$. However, $Y_3$ is incomparable to $Y_1$ and $Y_2$, which are in $\mathcal U$, implying that $A$ is incomparable to both $Y_1$ and $Y_2$. Therefore $A,Y_1, Y_2$ form an antichain of size 3 in $\mathcal A\cup\mathcal U$ that is not totally contained within $\mathcal{A}$, contradicting Lemma~\ref{lemma:antichain}.
\end{proof}

Now, returning to our setup, let $\mathcal{S}$ be the set of minimal elements of $\mathcal{U}$, which is non-empty as $[n]\in\mathcal{U}$. Since $\mathcal{S}$ is an antichain in $\mathcal{U}$, Lemma~\ref{lemma:antichain} tells us that $|S|\in \{1,2\}$, hence our analysis naturally splits.
\vspace{1em}

\noindent\underline{\textbf{Case I.} $|S|=2$.} Let $\mathcal{S} = \{X,Y\}$. We first observe that we cannot have every element of $\mathcal{A}$ contained in both $X$ and $Y$. Indeed, if that were the case, then $X\cap Y$ would be a separator of $\mathcal{F}$. Firstly, $X\cap Y\notin\{\emptyset, [n]\}$ since it contains all elements of $\mathcal A$, so it is not $\emptyset$, and it is strictly below both $X$ and $Y$, so it is not $[n]$. Next, every element of $\mathcal{U}$ contains at least one of $X$ and $Y$, and hence $X\cap Y$, every element of $\mathcal A$ is contained in both, hence in $X\cap Y$, and for a set outside of $\mathcal{A}\cup \mathcal U$, by the maximality of $\mathcal{A}$, it  sits below a set of $\mathcal{A}$, and therefore below both $X$ and $Y$, and thus below $X\cap Y$.

Therefore, without loss of generality, there exists $\overline{A}\in\mathcal{A}$ such that $\overline{A}\subset X$, and $\overline{A}$ is incomparable to $Y$. Moreover, if $A\in \mathcal{A}\setminus\{\overline{A}\}$, then by Lemma~\ref{lemma:antichain} $A,\overline{A},Y$ cannot form an antichain, thus $A\subset Y$.

As previously discussed, $\mathcal U$ may be covered by two chains in $\mathcal F$. Therefore $X$ is above a chain, which we can make maximal in the sense that there exists no element of $\mathcal F$ between consecutive terms of the chain. We denote this chain by $\mathcal C_X$. Therefore, the other part that covers $\mathcal{U}$ must be a chain above $Y$, which we call $\mathcal{C}_Y$. For clarity, let $\mathcal C_X=\{X_1,\dots,X_m\}$, where
$$\overline{A} = X_0\subset X_1 = X\subset X_2\subset \dots \subset X_m = [n],$$
with no member of $\mathcal{F}$ strictly between consecutive terms $X_i\subset X_{i+1}$ for all $0\leq i\leq m-1$.

The next lemma is one of our three main counting tools to achieve the claimed linear lower bound.

\begin{lemma}\label{lemma:count1} We have that $|\mathcal{F}|\geq n-|\overline{A}|+1.$
\end{lemma}
\begin{proof}
We will build an injection from $[n]\setminus\overline{A}$ into $\mathcal{F}\setminus\{\emptyset\}$. Therefore, fix $i\not\in\overline{A}$. Then, by construction, there exists $0\leq k\leq m-1$ such that $i\not\in X_k$, but $i\in X_{k+1}$. If $X_{k+1} = X_k\cup\{i\}$, then we send $i\mapsto X_{k+1}$. Otherwise, $X_k\cup\{i\}\not\in\mathcal{F}$, as it would be strictly between $X_k$ and $X_{k+1}$. Hence, $\mathcal{F}\cup \{X_k\cup\{i\}\}$ contains a $\widehat{\mathcal{V}}$ in which $X_k\cup\{i\}$ is one of the vertices. In this case, we claim the following.
\begin{claim1} There exists $Q_i\in\mathcal{F}$ such that $Q_i\setminus X_k = \{i\}$.
\end{claim1}
Once we have established the claim, the injection is complete as in this case we send $i\mapsto Q_i$. Note of course that no $Q_i$ can be equal to $\emptyset$.
\begin{proof}
Since $\mathcal F\cup\{X_k\cup\{i\}\}$ contains a $\widehat{\mathcal V}$, we consider 3 cases, depending on where $X_k\cup\{i\}$ is in the $\widehat{\mathcal{V}}$.

\noindent\textbf{Case 1.} $X_k\cup\{i\}$ is at the bottom of the $\mathcal{V}$.

More precisely, there exist $R,S,T\in\mathcal F$ such that $R,S, T$ form an antichain, $X_k\cup\{i\}\subset R,S$, and $T$ and $X_k\cup\{i\}$ are incomparable, as depicted below.

\begin{figure}[h]
\centering
\begin{tikzpicture}
\node at (0,0) {$\bullet$};
\node at (0,-0.5) {$X_k$};
\draw (0,0) -- (0,1);
\node at (0,1) {$\bullet$};
\draw (0,1) -- (0.75,2);
\draw (0,1) -- (-0.75,2);
\node at (0.75,2) {$\bullet$};
\node at (-0.75,2) {$\bullet$};
\node at (-1.1,2) {$R$};
\node at (1.1,2) {$S$};
\node at (-1,1) {$X_k\cup\{i\}$};
\draw[dotted] (0,0) -- (1.75,1.5);
\node at (1.75,1.5) {$\bullet$};
\node at (2.1,1.5) {$T$};
\end{tikzpicture}
\end{figure}

However, $R,S,T, X_k$ cannot form a $\widehat{\mathcal V}$ as they are all in $\mathcal F$. Thus $X_k$ and $T$ must be comparable, and hence $X_k\subset T$, as otherwise $T\subseteq X_k\cup\{i\}$, a contradiction. Since by construction $\overline{A}\subseteq X_k$, we note that $R,S,T$ form an antichain of size $3$ in $\mathcal{U}$, a contradiction. Therefore this case is impossible.

\noindent\textbf{Case 2.} $X_k\cup\{i\}$ is the isolated point.

In this case, there exist $R,S,T\in\mathcal F$ such that they are all incomparable to $X_k\cup\{i\}$, $R$ and $S$ are incomparable, and $T\subset R,S$, as illustrated below.

\begin{figure}[h]
\centering
\begin{tikzpicture}[xscale=1.25, yscale=1.25]
\node at (0,0) {$\bullet$};
\node at (0,-0.5) {$T$};
\draw (0,0) -- (1,1.5);
\draw (0,0) -- (-1,1.5);
\node at (1,1.5) {$\bullet$};
\node at (-1,1.5) {$\bullet$};
\node at (-1.35,1.5) {$R$};
\node at (1.35,1.5) {$S$};
\node at (2.2,0.75) {$\bullet$};
\node at (3.2,0.75) {$X_k\cup \{i\}$};
\draw (2.2,-0.4) -- (2.2,1.9);
\node at (2.2,-0.4) {$\bullet$};
\node at (2.2,1.9) {$\bullet$};
\node at (2.75,-0.4) {$X_k$};
\node at (2.9,1.9) {$X_{k+1}$};
\draw[dotted] (0,0) -- (2.2,-0.4);
\end{tikzpicture}
\end{figure}

In order for $R,S,T,X_k$ to not form a $\widehat{\mathcal V}$ in $\mathcal F$, we must have $X_k$ strictly below one of $R,S$ and $T$. If $X_k\subsetneq T$, then $R,S\in\mathcal{U}$. Moreover, note that $R$ and $S$ are incomparable to $X_{k+1}$. Indeed, if for example, $S$ is comparable to $X_{k+1}$, since it is incomparable to $X_k\cup\{i\}\subset X_{k+1}$, we must have $S\subset X_{k+1}$. This implies that $X_k\subset S\subset X_{k+1}$, contradicting the maximality of $\mathcal C_X$ inside $\mathcal F$. Thus, $R$ and $S$ are on the other chain $\mathcal{C}_Y$, a contradiction as they are incomparable.

The above argument shows in fact that $X_k$ cannot be below both $R$ and $S$. Therefore, without loss of generality, we may assume that $X_k$ is below $S$, and $X_k$ and $R$ are incomparable. As argued above, we have that $S\in \mathcal{U}$ and $S$ and $X_{k+1}$ are incomparable, which implies that $S$ is on the other chain $\mathcal C_{Y}$, and consequently $Y\subseteq S$. We now have the following diagram.

\begin{figure}[h]
\centering
\begin{tikzpicture}
\node at (0,0) {$\bullet$};
\node at (0,-0.5) {$T$};
\draw (0,0) -- (1,1.5);
\draw (0,0) -- (-1,1.5);
\node at (1,1.5) {$\bullet$};
\node at (-1,1.5) {$\bullet$};
\node at (-1.35,1.5) {$R$};
\node at (1.35,1.5) {$S$};
\node at (2.2,0.95) {$\bullet$};
\node at (3.2,0.95) {$X_k\cup \{i\}$};
\draw (2.2,0) -- (2.2,1.9);
\node at (2.2,0) {$\bullet$};
\node at (2.2,1.9) {$\bullet$};
\node at (2.75,0) {$X_k$};
\node at (2.9,1.9) {$X_{k+1}$};
\draw (1,1.5) -- (2.2,0);
\end{tikzpicture}
\end{figure}

Next, we observe that $R\not\in \mathcal{U}$. This is because if $R\in\mathcal U$, then $R$ has to be on one of the two chains that covers it. However, since $R$ and $S$ are incomparable, and $S\in\mathcal C_{Y}$, we have that $R\notin\mathcal C_{Y}$ which implies that $R\in \mathcal{C}_X$, forcing $R$ to be comparable to $X_k$, a contradiction.

Therefore $R\notin\mathcal U$, and by maximality of $\mathcal{A}$, $R\subseteq A$ for some $A\in\mathcal{A}$. If $A\neq\overline{A}$, then $A$ and $Y$ are comparable (otherwise $A,\overline{A}$ and $Y$ form an antichain in $\mathcal A\cup\mathcal U$ that is not entirely contained in $\mathcal A$), and so $R\subseteq A\subseteq Y\subseteq S$, a contradiction. Thus $A=\overline{A}$. Then, $R\subseteq \overline{A}\subseteq X_k\cup\{i\}$, a contradiction. Therefore this case is impossible too.

\noindent\textbf{Case 3:} $X_k\cup\{i\}$ is at the top of the $\mathcal{V}$.

Therefore there exist $R,S,T\in\mathcal F$ such that $S\subset X_k\cup\{i\}$, $S\subset R$, $R$ and $X_k\cup\{i\}$ are incomparable, and $T$ is incomparable to all $R,S,X_k\cup\{i\}$. This is depicted below.
\begin{figure}[h]
\centering
\begin{tikzpicture}
\node at (0,0) {$\bullet$};
\node at (0,-0.5) {$S$};
\draw (0,0) -- (1,1.5);
\node at (1,1.5) {$\bullet$};
\draw (0,0) -- (-1,1.5);
\node at (-1,1.5) {$\bullet$};
\node at (-2,1.5) {$X_{k}\cup\{i\}$};
\node at (1.35,1.5) {$R$};
\node at (-1,0.5) {$\bullet$};
\draw (0,0) -- (-1,0.5);
\draw (-1,0.5) -- (-1,2.5);
\draw node at (-1,2.5) {$\bullet$};
\draw node at (-1.5,0.5) {$X_k$};
\draw node at (-1.75,2.5) {$X_{k+1}$};
\draw (-1,0.5) -- (1,1.5);
\draw node at (2.25,0.75) {$\bullet$};
\draw node at (2.65,0.75) {$T$};
\end{tikzpicture}
\end{figure}

If $X_k$ and $S$ are incomparable, then $S\subset X_k\cup\{i\}$ implies $i\in S$ and consequently $S\setminus X_k = \{i\}$ as desired. Therefore we may assume that $X_k$ and $S$ are comparable. If $X_k\subset S$, then $|X_k|<|S|<|X_{k}\cup\{i\}|=|X_k|+1$, which is impossible. Thus $S\subseteq X_k$.

Suppose that $X_k\neq S$. Since $X_k,S,R,T$ cannot form a $\widehat{\mathcal V}$, we must have $X_k$ comparable to at least one of $R$ or $T$. However, $X_k$ and $T$ are incomparable since $X_k$ is between $S$ and $X_k\cup\{i\}$, both of which are incomparable to $T$. Therefore $X_k\subset R$, and so $R\in\mathcal{U}$. As in the previous case, $R$ and $X_{k+1}$ are incomparable (there is no element of $\mathcal F$ between $X_k$ and $X_{k+1}$), thus $R\notin \mathcal C_X$, and so $R\in \mathcal{C}_Y$, which implies that $Y\subseteq R$. 

Next, if $T\in\mathcal U$, since $R$ and $T$ are incomparable and $R\in\mathcal C_Y$ , we must have that $T\in\mathcal{C}_X$, which contradicts the fact that $X_k$ is incomparable to $T$. Thus $T\not\in\mathcal{U}$. Therefore, there exists $A\in\mathcal A$ such that $T\subseteq A$. If $A\neq\overline{A}$, then $A\subset Y$, and hence $T\subset Y\subseteq R$, a contradiction. If $A =\overline{ A}$, then $T\subseteq A\subseteq X\subseteq X_k$, a contradiction again.

Therefore we must have $X_k = S$, and the figure then becomes the following.

\begin{figure}[h]
\centering
\begin{tikzpicture}
\node at (0,0) {$\bullet$};
\node at (0,-0.5) {$X_k$};
\draw (0,0) -- (1,1.5);
\draw (0,0) -- (-1,1.5);
\node at (1,1.5) {$\bullet$};
\node at (-1,1.5) {$\bullet$};
\node at (1,2) {$R$};
\node at (-2,1.5) {$X_{k}\cup\{i\}$};
\draw (-1,1.5) -- (-1,2.5);
\node at (-1,2.5) {$\bullet$};
\node at (-1.75,2.5) {$X_{k+1}$};
\node at (1.8,0.4) {$\bullet$};
\node at (2.2,0.4) {$T$};
\draw[dotted] (1.8,0.4) -- (-1,2.5);
\end{tikzpicture}
\end{figure}

By the same argument as above, $R$ is incomparable to $X_{k+1}$, and since $X_k\subset R$, we have that $R\in\mathcal U$ and so $R\in\mathcal{C}_Y$. Furthermore, $X_{k+1},X_k, R, T$ cannot form a $\widehat{\mathcal V}$, $T$ and $X_{k+1}$ must be comparable. Since $X_k\subset X_{k+1}$, and $T$ and $X_{k}$ are incomparable, we have that $T\subset X_{k+1}$. If $T\in\mathcal{U}$, since $R$ and $T$ are incomparable, we must have that $T\in\mathcal{C}_X$, a contradiction as $T$ and $X_k$ are incomparable. If $T\not\in \mathcal{U}$, then there exists $A\in\mathcal A$ such that $T\subseteq A$. If $A\neq\overline{A}$, then $T\subset Y\subseteq R$, a contradiction. If $A=\overline{A}$, then $T\subseteq A\subset X\subseteq X_k$, a contradiction again.
\end{proof}
Therefore, for every $i\not\in \overline{A}$, we get that either $X_k\cup\{i\}\in \mathcal{F}$, or $X_k\cup\{i\}\notin\mathcal F$ in which case there exists $Q_i\in\mathcal F$ such that $Q_i\setminus X_k =\{i\}$. We recall that $k$ is such that $i\notin X_k$ but $i\in X_{k+1}$. We claim that the function sending $i$ to $X_{k}\cup\{i\}$ in the first case, and to $Q_i$ in the second, is an injection from $[n]\setminus\overline A$ to $\mathcal F$, which finishes the proof.

Indeed, let $i\neq j$ be two elements not in $\overline{A}$. Let $k_1$ be such that $i\in X_{k_1+1}\setminus X_{k_1}$ and $j\in X_{k_2+1}\setminus X_{k_2}$. Suppose that $i$ and $j$ are assigned the same set in $\mathcal F$. We cannot have $X_{k_1}\cup\{i\} = X_{k_2}\cup \{j\}$ as this would mean that $X_{k_1}$ and $X_{k_2}$ are incomparable, a contradiction. Therefore at least one of $X_{k_1}\cup\{i\}$ and $X_{k_2}\cup\{j\}$ is not in $\mathcal F$.

Suppose now that neither set is in $\mathcal F$, and $Q_i = Q_j = Q$. By construction we have that $Q\subset X_{k_1}\cup\{i\}$, $Q\subset X_{k_2}\cup\{j\}$, and that $i,j\in Q$. Without loss of generality, we may assume that $k_1\leq k_2$. Since $X_{k_1}\subseteq X_{k_2}$, and $j\not\in X_{k_2}$ we get that $j\not\in X_{k_1}$. However, $j\in Q\subset X_{k_1}\cup\{i\}$, which implies that $j\in X_{k_1}$, a contradiction.

Finally, suppose that $X_{k_1}\cup\{i\}\in\mathcal F$, $X_{k_2}\cup\{j\}\notin\mathcal F$, and there exists $Q_j\in\mathcal F$ such that $Q_j \subset X_{k_2}\cup \{j\}$ and $j\in Q_j$. If $X_{k_1}\cup\{i\}=Q_j$, then $j\in X_{k_1}$, and hence $X_{k_2}\subseteq X_{k_1}$. This in turn implies that $i\not\in X_{k_2}$. However, $X_{k_1}\cup\{i\}=Q_j\subset X_{k_2}\cup\{j\}$, a contradiction.

Thus, the claimed map is indeed an injection, and so $|\mathcal{F}\setminus\{\emptyset\}|\geq n-|\overline{A}|$, which completes the proof.
\end{proof}
The next lemma is our second main counting tool. Here we look at the singletons inside $\overline{A}$ and inject a certain amount of them into $\mathcal F$, this time going down the rooted tree $\mathcal T_{\overline{A}}$.
\begin{lemma}\label{lemma:count2} Suppose that there exists $C\in\mathcal{T}_{\overline{A}}$ non-minimal. Then, there exists $B\subset C$ such that no member of $\mathcal{F}$ is between $B$ and $C$, and for every $i\in C\setminus B$ there exists $Z_i\in\mathcal{F}$ such that $Z_i\setminus B= \{i\}$.
\end{lemma}

\begin{proof}
Suppose that such a set does not exist. Let $B\in\mathcal F$ be such that $C$ covers $B$ in $\mathcal F$. Then there exists $i\in C\setminus B$ such that there exists no $Z_i\in\mathcal F$ with the property that $Z_i\setminus B=\{i\}$. We must then have $|C\setminus B|\geq 2$, as otherwise $C$ itself would satisfy $|C\setminus B| = 1$. Therefore, $B\subset B\cup\{i\}\subset C$, and so $B\cup\{i\}\not\in \mathcal{F}$. By saturation, adding $B\cup\{i\}$ to $\mathcal{F}$ creates a copy of $\widehat{\mathcal{V}}$, and there are then 3 cases depending on where $B\cup\{i\}$ is in this copy.

\noindent\textbf{Case 1:} $B\cup\{i\}$ is at the bottom of the $\mathcal{V}$.

Therefore there exist $R,S,T\in\mathcal F$ such that $T$ is incomparable to all of $R,S,B\cup\{i\}$, $B\cup\{i\}\subset R,S$, and $R$ and $S$ are incomparable. This is pictured below.

\begin{figure}[h]
\centering
\begin{tikzpicture}
\node at (0,0) {$\bullet$};
\node at (0,-0.5) {$B$};
\draw (0,0) -- (0,1);
\node at (0,1) {$\bullet$};
\draw (0,1) -- (0.75,2);
\draw (0,1) -- (-0.75,2);
\node at (0.75,2) {$\bullet$};
\node at (-0.75,2) {$\bullet$};
\node at (-1.1,2) {$R$};
\node at (1.1,2) {$S$};
\node at (-1,1) {$B\cup\{i\}$};
\draw[dotted] (0,0) -- (1.75,1.5);
\node at (1.75,1.5) {$\bullet$};
\node at (2.1,1.5) {$T$};
\end{tikzpicture}
\end{figure}

In order for $B,R,S,T$ to not form a $\widehat{\mathcal V}$ in $\mathcal F$, we need $B$ and $T$ to be comparable. Since $B\cup\{i\}$ and $T$ are incomparable, we must have $B\subset T$. But then $\mathcal{U}_B$ contains the antichain $R,S,T$, which contradicts the second part of Lemma \ref{lemma:forest}, since $B\in\mathcal T_{\overline{A}}$.

\noindent\textbf{Case 2:} $B\cup\{i\}$ is the isolated point.

Therefore there exist $R,S,T\in\mathcal F$ such that $R,S,T$ form a $\mathcal V$ with $T$ the minimal element of $\mathcal V$, and $B\cup\{i\}$ incomparable to all $R,S,T$, as depicted below. 

\begin{figure}[h]
\centering
\begin{tikzpicture}
\node at (0,0) {$\bullet$};
\node at (1,1) {$\bullet$};
\node at (-1,1) {$\bullet$};
\node at (2,0) {$\bullet$};
\node at (2,1) {$\bullet$};
\node at (2,2) {$\bullet$};
\node at (0.5,1.5) {$\bullet$};
\draw (-1,1) -- (0,0) -- (1,1);
\draw (2,0) -- (2,1) -- (2,2);
\draw [dotted] (1,1) -- (2,0);
\draw [dotted] (-1,1) -- (2,2);
\node at (0,-0.5) {$T$};
\node at (2,-0.5) {$B$};
\node at (3,1) {$B\cup\{i\}$};
\node at (2.4,2) {$C$};
\node at (-1.4,1) {$R$};
\node at (0.5,2) {$B^\prime$};
\node at (1,0.6) {$S$};
\end{tikzpicture}
\end{figure}

Then, $R,S,T,B$ cannot form a $\widehat{\mathcal V}$, and so one of $R,S,T$ must be comparable to $B$. Since they are all incomparable to $B\cup\{i\}$, $B$ must be strictly below at least one of them.  If $B$ is below both $R$ and $S$, then $C$ is incomparable to both $R$ and $S$. Indeed, if for example $S$ and $C$ were comparable, since $B\cup\{i\}\subset C$, we must have $S\subset C$. However, this contradicts the fact that $C$ covers $B$ in $\mathcal F$. Therefore $R,S,C$ form an antichain of length 3 in $\mathcal{U}_B$, contradicting Lemma~\ref{lemma:forest}.
    
Therefore $B$ is incomparable to exactly one of $R$ and $S$, and consequently to $T$. Without loss of generality we may assume that $B$ is incomparable to $R$, and that $B\subset S$. As above, this also implies that $S$ is incomparable to $C$.

Since $R,S,B,C$ cannot form a copy of $\widehat{\mathcal{V}}$, we must have $R$ and $C$ comparable. Since $B\cup\{i\}$ is incomparable to $R$ and a subset of $C$, we must have $R\subset C$. Now let $R\subseteq B^\prime\subset C$ be covered by $C$ in $\mathcal{F}$. Note that this implies that $B^\prime\neq B$. Moreover, we cannot have $S\subseteq B^\prime$, as otherwise $S$ and $C$ would be comparable, a contradiction, and if $B^\prime\subseteq S$, then $R$ and $S$ would be comparable, a contradiction again. Hence, we must have that $B^\prime$ and $S$ are incomparable. 

We now focus on where $S$ is in the family. Since $B\subset S$ and $B\in\mathcal T_{\overline{A}}$, we have that $S\in\mathcal T_{\overline{A}}\cup\mathcal A\cup\mathcal U.$ We note that $S\not\in \mathcal{T}_{\overline A}$, as otherwise we would have the following forbidden configuration by Lemma~\ref{lemma:forest}.

\begin{figure}[h]
\centering
\begin{tikzpicture}[xscale=0.8, yscale=0.8]
\node at (0,0) {$\bullet$};
\node at (1,1) {$\bullet$};
\node at (-1,1) {$\bullet$};
\node at (0,2) {$\bullet$};
\draw (0,0) -- (1,1) -- (0,2) -- (-1,1) -- (0,0);
\node at (0,-0.5) {$B$};
\node at (0,2.5) {$\overline{A}$};
\node at (1.5,1) {$C$};
\node at (-1.5,1) {$S$};
\end{tikzpicture}
\end{figure}

Furthermore, $\overline{A}\not\subseteq S$ as $B^\prime\subset\overline{A}$ and $B'$ is incomparable to $S$. Therefore $\overline{A}$ and $S$ are incomparable, which implies that $S\notin\mathcal A$, as otherwise $B$ would be below at least $2$ elements of $\mathcal{A}$. This implies that $S\in \mathcal U$, which coupled with the fact that $S$ and $\overline{A}$ are incomparable, gives that $S\in\mathcal C_Y$.

So in this case we produced another $B'$ covered by $C$, and a set $S\in\mathcal C_Y$ such that $B'$ and $S$ are incomparable, but $B\subset S$.
    
\noindent\textbf{Case 3:} $B\cup\{i\}$ is one of the two top elements of the $\mathcal{V}$.

In other words, there exist $R,S,T\in\mathcal F$ such that $R\subset B\cup\{i\}$ and $R\subset T$, while $T$ and $B\cup\{i\}$ are incomparable, and $S$ is incomparable to all of $R,T,B\cup\{i\}$. This is pictured below. 

\begin{figure}[h]
\centering
\begin{tikzpicture}
\node at (0,0) {$\bullet$};
\node at (-1,1) {$\bullet$};
\node at (1,1) {$\bullet$};
\node at (2,0.5) {$\bullet$};
\node at (-1,0.25) {$\bullet$};
\node at (-1,2) {$\bullet$};
\draw (-1,1) -- (0,0) -- (1,1);
\draw (-1,0.25) -- (-1,2);
\draw [dotted] (0,0) -- (-1,0.25);
\draw [dotted] (-1,0.25) -- (2,0.5);
\draw [dotted] (-1,2) -- (1,1);
\node at (0,-0.4) {$R$};
\node at (-1.5, 0.25) {$B$};
\node at (-1.9,1) {$B\cup\{i\}$};
\node at (-1.5,2) {$C$};
\node at (1.4,1) {$T$};
\node at (2.4,0.5) {$S$};
\end{tikzpicture}
\end{figure}

If $B$ and $R$ are incomparable, since $R\subset B\cup\{i\}$, we must have $i\in R$, and so $R\setminus B = \{i\}$, contradicting the choice of $i$. Thus, we must have that $R\subseteq B$. We note that this immediately implies that $B$ and $S$ are incomparable. Indeed, $B\subseteq S$ gives $R\subset S$, a contradiction, and $S\subseteq B$ gives $S\subset B\cup\{i\}$, a contradiction again.

Next, suppose that $B$ and $T$ are incomparable. In particular, since $R\subset T$, this implies that $R\neq B$. Therefore, since $B,R,T,S$ form a $\widehat{\mathcal{V}}$ in $\mathcal F$, we get a contradiction. 

Therefore, $B$ and $T$ must be comparable. We cannot have $T\subseteq B$ as this would imply $T\subset B\cup\{i\}$, a contradiction. Hence, we have that $B\subset T$. Looking now at the relation between $C$ and $T$ we see that they must be incomparable. Indeed, if they were comparable, since $B\cup\{i\}\subset C$ and $B\cup\{i\}$ and $T$ are incomparable, we can only have $T\subset C$. However, $T$ would now be strictly between $B$ and $C$, contradicting the fact that $C$ covers $B$. Therefore, since now $C,B,T,S$ cannot form a $\widehat{\mathcal V}$ in $\mathcal F$, we must have $S$ comparable to $C$. Since $B\cup\{i\}\subset C$ and $B\cup\{i\}$ and $S$ are incomparable, we must have $S\subset C$. The picture then becomes the following.

\begin{figure}[h]
\centering
\begin{tikzpicture}
\node at (0,0) {$\bullet$};
\node at (-1,1) {$\bullet$};
\node at (-2,2) {$\bullet$};
\node at (2,2) {$\bullet$};
\node at (3,1) {$\bullet$};
\node at (-2,3) {$\bullet$};
\draw (2,2) -- (0,0) -- (-2,2) -- (-2,3);
\draw [dotted] (-2,3) -- (3,1);
\node at (2.5,1.2) {$\bullet$};
\node at (2.1,1) {$B^\prime$};
\node at (3.4,1) {$S$};
\node at (0,-0.4) {$R$};
\node at (-1.4,1) {$B$};
\node at (-3,2) {$B\cup\{i\}$};
\node at (-2.4,3) {$C$};
\node at (2.4,2) {$T$};
\draw [dotted] (-1,1) -- (2,2);
\end{tikzpicture}
\end{figure}

With the same argument as above, $C$ covering $B$ implies that $B$ and $S$ are incomparable. Let $B'\in\mathcal F$ be such that $S\subseteq B^\prime$ and $C$ covers $B^\prime$. Since $B$ and $S$ are incomparable, we get that $B\neq B^\prime$. Moreover, since $T$ is incomparable to both $C$ and $S$ and $S\subseteq B^\prime\subset C$, we get that $T$ and $B^\prime$ are incomparable.

Finally, we establish where in the family $T$ is situated. Since $B\subset T$, we must have $T\in\mathcal T_{\overline{A}}\cup\mathcal A\cup\mathcal U$. Since $C$ is incomparable to $T$ and $C\subseteq\overline{A}$, we cannot have $\overline{A}\subseteq T$. Also, as $C,B,T$ form a $\mathcal{V}$, we cannot have $T\subset\overline{A}$, otherwise we contradict the fact that $\mathcal T_{\overline{A}}$ is a tree. Hence $T\notin\mathcal T_{\overline{A}}$ and $T$ is incomparable to $\overline{A}$. As before, all of this, combined with the fact that $B\subset\overline{A}$ and $B\subset T$, implies that $T\in \mathcal{U}$. Since $T$ and $\overline{A}$ are incomparable, we must have $T\in \mathcal{C}_Y$.

We conclude that for every $B\in\mathcal F$ that is covered by $C$, we find a different $B^\prime$, also covered by $C$, and a set $M_B\in\mathcal{C}_Y$ such that $B\subset M_B$, and $M_B$ incomparable to $B^\prime$. Performing the same as above for $B^\prime$, we get $B^{\prime\prime}$ and $M_{B^\prime}\in \mathcal{C}_Y$ with $B^\prime\subsetneq M_{B^\prime}$, and $M_{B^\prime}$ is incomparable to $B^{\prime\prime}$.

We cannot have $M_{B^\prime}\subseteq M_B$ as this would imply $B^\prime\subset M_B$, a contradiction. Hence we must have $M_B\subset M_{B^\prime}$. Therefore, iterating this process, we get an infinite chain of $M_{B}$'s, a contradiction. This completes the proof.
\end{proof}
Putting together Lemma \ref{lemma:count2}, Lemma \ref{lemma:count1}, and Lemma \ref{lemma:minimal}, we arrive at the desired lower bound.

Indeed, starting with $C_0 =\overline{A}$, we apply Lemma \ref{lemma:count2} repeatedly until a minimal member of $\mathcal T_{\overline{A}}$ is reached, call it $M$. It is easy to see that $M$ is also a minimal element of $\mathcal F\setminus\{\emptyset\}$. Therefore we obtain a strictly decreasing chain, $\overline{A} = C_0\supset C_1\supset\dots\supset C_l = M$. By Lemma~\ref{lemma:count2} the chain is maximal in $\mathcal{F}$. Moreover, for every $i\in C_r\setminus C_{r+1}$, there exists $Z_i\in\mathcal{F}$ such that $Z_i\setminus C_{r+1} = \{i\}$. 

Moreover, the sets $Z_i$ for $i\in\overline{A}\setminus M$ are pairwise distinct. Indeed, suppose that $Z_i=Z_j=Z$ for some $i\neq j$. Then, they cannot come from the same `gap', i.e.~there exists $0\leq r<l$ such that $i,j\in C_r\setminus C_{r+1}$, as $Z_i$ and $Z_j$ would have different difference with $C_{r+1}$. Thus $i\in C_r\setminus C_{r+1}$ and $j\in C_s\setminus C_{s+1}$, and without loss of generality, we may assume that $r<s$. We have that $C_{s}\subseteq C_{r+1}$ and $j\in C_s\setminus C_{s+1}$, and so $j\in C_{r+1}$. hence $Z_j\subseteq C_{r+1}$. However, $Z\subset C_{s}\cup\{j\}\subseteq C_{r+1}$. This is a contradiction as $Z_i\setminus C_{r+1} = \{i\}$. Furthermore, $Z_i\in\mathcal F\setminus\{\emptyset\}$, and so  we get that
$$|\mathcal{F}| \geq |\overline{A}| - |M|+1.$$

Furthermore, looking at the elements that give the count in Lemma~\ref{lemma:count1}, they all contain a singleton outside $\overline{A}$, while the ones obtained from iterating Lemma~\ref{lemma:count2} are all subsets of $\overline{A}$, all different from $M$ (as they all contain at least one singleton outside $M$). So in fact, the two lemmas give $|\mathcal F\setminus\{\emptyset,M\}|\geq n-|M|$, and so $$|\mathcal F|\geq n-|M|+2.$$
However, Lemma \ref{lemma:minimal} gives that $|\mathcal{F}|\geq |M|+2$. Adding these together gives
$|\mathcal{F}|\geq \frac{n}{2}+2$, as claimed.

\vspace{1em}
\noindent\underline{\textbf{Case II.} $|S|=1$.} In this case $\mathcal U$ has exactly one minimal element, call it $N\in\mathcal F$. If $N$ is comparable to all elements of $\mathcal A$ (it has to be necessarily above all of them), then $N$ separates $\mathcal F$. Indeed, if $S\in\mathcal U$, then $N\subseteq S$, otherwise $S\subseteq A\subset N$ for some $A\in\mathcal A$. However, $\mathcal F$ is not separable, and since $\emptyset\neq N$, we must have $N=[n]$. This fact, together with Lemma~\ref{lemma:forest} implies that $\mathcal F\setminus\{\emptyset\}$ is a downward tree, rooted at $[n]$, hence it is $\mathcal V$-free. Moreover, if $S\notin \mathcal F\setminus\{\emptyset\}$, then adding $S$ to this family creates a $\mathcal V$. Indeed, if $S=\emptyset$ then, together with 2 elements of $\mathcal A$ we have a $\mathcal V$. Otherwise, $S\notin\mathcal F$ and so it forms a $\widehat{\mathcal V}$ with 3 sets of $\mathcal F$. Since none of these sets can be $\emptyset$, we conclude that $(\mathcal F\setminus\{\emptyset\})\cup\{S\}$ contains a $\widehat{\mathcal V}$, and hence a $\mathcal V$. Thus $\mathcal F\setminus\{\emptyset\}$ is $\mathcal V$-saturated, and so $|\mathcal F|-1\geq n+1$ \cite{ferrara2017saturation}, giving $|\mathcal F|\geq n+2>\frac{n}{2}+2$, as desired.

Therefore, we may assume that $N$ is not comparable to all elements of $\mathcal A$. Thus, there exists $\overline{A}\in\mathcal A$ such that $\overline{A}$ and $N$ are incomparable. Same as before, since we cannot have an antichain of size 3 in $\mathcal A\cup U$ that is not completely contained in $\mathcal A$, we get that all elements of $\mathcal A\setminus\{\overline{A}\}$ are below $N$. It follows that the set $U\cup\{\overline{A\}}$ has width at most 2. We cover this set with 2 chains, the same way we covered $\mathcal U$ in Case I. Let $\mathcal C_1$ be a saturated chain in $\mathcal F$ that starts at $\overline{A}$ and ends at $[n]$, and let $\mathcal C_2$ be the second chain, which will have to cover $N$. By exchanging the chains $\mathcal C_X$ and $\mathcal C_Y$ from Case I with the chains $\mathcal C_1$ and $\mathcal C_2$ respectively, and $Y$ with $N$, Lemmas~\ref{lemma:count1} and~\ref{lemma:count2} apply verbatim, giving the same $\frac{n}{2}+2$ lower bound.
\end{proof}

\section{Proof of the main theorem}

In this section, everything ties together to achieve linearity for the saturation number of $\widehat{\mathcal V}$. The linear upper bound is easy to come across, especially since one just needs a $\mathcal V$-saturated family to which $\emptyset$ is added. For the lower bound, Proposition~\ref{inseparable} completely settles the inseparable case, while a separable family naturally gives rise to an inductive argument using the inequality established at the end of Section 2.
\begin{theorem}\label{theorem:main} Let $n\geq2$. Then $$\frac{n}{2}+2\leq\text{sat}^*(n,\widehat{\mathcal V})\leq n+2.$$
\end{theorem}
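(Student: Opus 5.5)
For the upper bound I will use the family $\mathcal G=\{\emptyset,[n]\}\cup\{[n]\setminus\{i\}:i\in[n]\}$, of size $n+2$. Its sets other than $\emptyset$ and $[n]$ form an antichain, and $\emptyset,[n]$ cannot lie in any copy of $\widehat{\mathcal V}$, so $\mathcal G$ contains no $\mathcal V$ at all and is $\widehat{\mathcal V}$-free. For saturation, take $X\notin\mathcal G$. Then $1\le |X|\le n-2$, so at least two elements $i,j$ lie outside $X$, giving $X\subset[n]\setminus\{i\},[n]\setminus\{j\}$. For the isolated point I want some $[n]\setminus\{k\}$ incomparable to $X$, i.e.\ some $k\in X$; this exists since $X\neq\emptyset$. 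That set is automatically incomparable to $[n]\setminus\{i\}$ and $[n]\setminus\{j\}$, so we get a copy of $\widehat{\mathcal V}$. The small case $n=2$ I will just check directly: $\mathcal P([2])$ has no $\widehat{\mathcal V}$ at all, so $\text{sat}^*=4\le 4$, and $4\ge 3=\frac n2+2$.

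For the lower bound I will prove $|\mathcal F|\ge \frac n2+2$ for every $\widehat{\mathcal V}$-saturated $\mathcal F\subseteq\mathcal P([n])$ by strong induction on $n$, with $n\le 2$ as base cases. For $n=1$ the statement reads $\ge 2.5$, so I will either restrict the induction to $n\ge 2$ or treat sizes $|S|=1$ separately (see below). Given $n\ge3$, I first dispose of $|\mathcal F|\ge 2n$, since then we are done. Otherwise $|\mathcal F|<2n$, so the standing hypothesis of Section 2 holds.
\begin{itemize}
\item If $\mathcal F$ is inseparable, Proposition~\ref{inseparable} gives the bound directly.
\item If $\mathcal F$ has a separator $S$, the inequality at the end of Section 2 gives $|\mathcal F|\ge \text{sat}^*(|S|,\widehat{\mathcal V})+\text{sat}^*(n-|S|,\widehat{\mathcal V})-1$ with $1\le |S|\le n-1$.
\end{itemize}

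The main obstacle is the arithmetic in the separable case. Applying induction to both parts gives $\frac{|S|}2+2+\frac{n-|S|}2+2-1=\frac n2+3$, which is fine provided both $|S|$ and $n-|S|$ are covered by the induction. The trouble is when $|S|=1$ or $n-|S|=1$. There $\text{sat}^*(1,\widehat{\mathcal V})=2$, since the saturated family is all of $\mathcal P([1])$. This only gives $2+\frac{n-1}2+2-1=\frac n2+2.5$, which still suffices. Similarly $\text{sat}^*(2,\cdot)=4\ge 3$ is consistent. So the plan is to use the exact small values $\text{sat}^*(1)=2$ and $\text{sat}^*(2)=4$, and to use the bound $\frac m2+2$ for $m\ge 3$ by induction. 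In every case the sum is at least $\frac n2+2$. I will double-check that the separator decomposition needs nothing beyond what Section 2 established; in particular, the subfamily below $S$ is saturated in $\mathcal P(S)$ even when $|S|\le2$, and the argument given there does not use $n\ge3$.
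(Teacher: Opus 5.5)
Your proposal is correct and follows essentially the same route as the paper. For the upper bound you use the same family $\{\emptyset,[n]\}\cup\{[n]\setminus\{i\}:i\in[n]\}$. For the lower bound you use the same strong induction: discard $|\mathcal F|\ge 2n$, invoke Proposition~\ref{inseparable} in the inseparable case, and in the separable case apply the separator inequality, with $\text{sat}^*(1,\widehat{\mathcal V})=2$ handling $|S|\in\{1,n-1\}$.

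One small wording fix: $\mathcal G$ does contain a $\mathcal V$ ($\emptyset$ below two coatoms). What your argument actually uses, and what is true, is that $\emptyset$ and $[n]$ lie in no copy of $\widehat{\mathcal V}$ while $\mathcal G\setminus\{\emptyset,[n]\}$ is an antichain.
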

\begin{proof}
We first establish the lower bound. Let $\mathcal F$ be a $\widehat{\mathcal V}$-saturated family with ground set $[n]$. If $n= 2$, then it is trivial to see that $\mathcal F$ has to contain the entire power set, and hence the inequality is true. Thus, we may suppose that $n\geq3$ and that the result is true for all $n_0<n$. Furthermore, if $|\mathcal F|\geq 2n$, then $|\mathcal F|\geq\frac{n}{2}+2$, so we may additionally assume that $|\mathcal F|<2n$. This means that the previous results apply. In particular, if $\mathcal F$ is not separable, then by Proposition~\ref{inseparable} we have that $|\mathcal F|\geq\frac{n}{2}+2$. If $\mathcal F$ is separable, let $S\notin\{\emptyset, [n]\}$ be a separator. Then $|\mathcal F|\geq\text{sat}^*(|S|,\widehat{\mathcal V})+\text{sat}^*(n-|S|,\widehat{\mathcal V})-1$. If $|S|\notin\{1,n-1\}$, applying the induction hypothesis we get that $|\mathcal F|\geq\frac{|S|}{2}+2+\frac{n-|S|}{2}+2-1$, thus $|\mathcal F|\geq\frac{n}{2}+3>\frac{n}{2}+2$. If, without loss of generality $|S|=1$, we get $|\mathcal F|\geq 2+\frac{n-1}{2}+2-1=\frac{n}{2}+2+\frac{1}{2}>\frac{n}{2}+2$. This finishes the induction step, and hence the proof of the lower bound.

For the upper bound, it is trivial to check that the family $\{\emptyset,[n],[n]\setminus\{i\}:i\in[n]\}$ is $\widehat{\mathcal V}$-saturated. Indeed, no copy of $\widehat{\mathcal V}$ exists (below any antichain there is only $\emptyset$), but if $X\notin\mathcal F$, let $i,j\notin X$, $i\neq j$, and $k\in X$. Then $X,[n]\setminus\{i\},[n]\setminus\{j\}, [n]\setminus\{k\}$ form a $\widehat{\mathcal V}$. This finishes the proof.
\end{proof}
\section{Concluding remarks}
It is worth pointing out that while we could guarantee the sets in Lemmas~\ref{lemma:count1} and~\ref{lemma:count2} to be pairwise distinct, we cannot guarantee them to be distinct from the ones in Lemma~\ref{lemma:minimal}, i.e. the witnesses $Q_i\in\mathcal F$ corresponding to the elements $i\in M$ where $M\setminus Q_i=\{i\}$. For example, it can happen that $M=\{i\}$, $Q_i=[n]\setminus\{i\}$, and $Q_i$ might be also associated to a singleton outside $\overline{A}$. A similar `collision' could in principle happen between Lemma~\ref{lemma:minimal} and Lemma~\ref{lemma:count2}.

If one could guarantee, without the empty set, $|M|+1$ sets in Lemma~\ref{lemma:minimal}, disjoint from all the ones in Lemma~\ref{lemma:count1} ($n-|\overline{A}|$), and disjoint from all the ones in Lemma~\ref{lemma:count2} (|$\overline{A}|-|M|)$. Then, in the inseparable case, together with the empty set, we would get the lower bound of exactly $n+2$, which also survives the induction presented in Theorem~\ref{theorem:main}. This would show that the \textit{exact} saturation number for $\widehat{\mathcal V}$ is $n+2$.

We strongly believe that this is true. It is possible that a more careful analysis of our methods yields this result. In particular, a promising strategy would be to show that for every counting `collision', more elements of the family can be counted, by either using the two chains above $\mathcal A$, or the tree structure underneath it.

Another interesting feature about $\widehat{\mathcal V}$-saturated is that not all of them contain sets $S,S\cup\{i\}\in\mathcal F$ with $i\notin S$ for all $i$ in the ground set. This is one of the main tools so far in showing a lower bound of $n+1$ -- this trivially follows from the acyclicity of the graph on vertices elements of $\mathcal F$, and edges between $S$ and $S\cup\{i\}$ for exactly one such pair per singleton $i$. Take for example $n=6$ and the following $\widehat{\mathcal V}$-saturated family which does not have such a pair for $i=3,5$: $$\mathcal F=\{\emptyset,\{1,3\},\{2,3\},\{1,4\},\{3,4\},\{1,5\},\{2,5\},\{3,5\},\{4,5\},\{2,6\},\{3,6\},\{5,6\},$$\vspace{-1.5em}$$\{1,3,4,5\},\{2,3,5,6\},\{1,2,3,5,6\},\{1,3,4,5,6\},\{1,2,3,4,5,6\}\}.$$ This example shows that Conjecture~\ref{conjecture:main} cannot be attacked in this way. However, the above construction is not minimal, as $n=6$ and $\mathcal F$ has $17>6+2$ sets. Thus, the following could be true.

\begin{conjecture}\label{conjconj}
Let $\mathcal P$ be such that $\text{sat}^*(n,\mathcal P)\to\infty$ as $n\to\infty$. Then, for $n$ large enough, if $\mathcal F\subseteq\mathcal P([n])$ is a $\mathcal P$-saturated family of size  $\text{sat}^*(n,\mathcal P)$, for every $i\in[n]$, there exists $S,S\cup\{i\}\in\mathcal F$, where $i\notin S$.
\end{conjecture}
This would of course imply that for any poset $\mathcal P$, the saturation number is either bounded or at least $n+1$, which is exactly Conjecture~\ref{conjecture:main}.

Lastly, it is worth asking the following question: given a poset $\mathcal P$, how closely related, \textit{structurally}, a saturated family for $\mathcal P$ is, to a saturated family for $\mathcal P$ plus an isolated point? Looking back on our proof for $\widehat{\mathcal V}$, our family is, more or less, a carefully chosen antichain $\mathcal A$, at most 2 chains above it, and a forest underneath it, the latter being exactly the structure of a $\mathcal V$-free family. Moreover, this structure was absolutely essential to proving the linear lower bound, as Lemma~\ref{lemma:count2} heavily relies on it.
\bibliographystyle{amsplain}
\bibliography{references}
\Addresses
\end{document}